\definecolor{webgreen}{rgb}{0,.5,0}
\definecolor{webbrown}{rgb}{.6,0,0}
\numberwithin{equation}{section}
\begin{document}


\theoremstyle{plain}
\newtheorem{theorem}{Theorem}
\newtheorem{corollary}[theorem]{Corollary}
\newtheorem{lemma}{Lemma}
\newtheorem{proposition}{Proposition}
\theoremstyle{remark}
\newtheorem{remark}[theorem]{Remark}
\theoremstyle{plain}
\newtheorem*{example}{Examples}

\newcommand{\lrf}[1]{\left\lfloor #1\right\rfloor}

\begin{center}
\vskip 1cm{\LARGE\bf On a family of infinite series with reciprocal Catalan numbers \\
\vskip .11in }

\vskip 1cm

{\large

Kunle Adegoke \\
Department of Physics and Engineering Physics, \\ Obafemi Awolowo University, 220005 Ile-Ife, Nigeria \\
\href{mailto:adegoke00@gmail.com}{\tt adegoke00@gmail.com}

\vskip 0.2 in

Robert Frontczak\footnote{Statements and conclusions made in this article by R. F. are entirely those of the author. They do not necessarily reflect the views of LBBW.} \\
Landesbank Baden-W\"urttemberg (LBBW), Stuttgart,  Germany \\
\href{mailto:robert.frontczak@lbbw.de}{\tt robert.frontczak@lbbw.de}

\vskip 0.2 in

Taras Goy  \\
Faculty of Mathematics and Computer Science\\
Vasyl Stefanyk Precarpathian National University, Ivano-Frankivsk, Ukraine\\
\href{mailto:taras.goy@pnu.edu.ua}{\tt taras.goy@pnu.edu.ua}}
\end{center}

\vskip .2 in

\begin{abstract}
We study a certain family of infinite series with reciprocal Catalan numbers. We first evaluate two special candidates of the family in closed form, where we also present some Catalan-Fibonacci relations. Then we focus on the general properties of the family and prove explicit formulas, including two types of integral representations.
\end{abstract}

\section{Introduction and Motivation}

The famous Catalan numbers $C_n, n\geq 0,$ are defined by $C_n = \frac{1}{n+1}\binom {2n}{n}$.
They can be also expressed by the recursion

\begin{equation*}
C_n = \frac{2(2n-1)}{n+1} C_{n-1}, \quad C_0=1.
\end{equation*}

The generating function for $C_n$ is

\begin{equation*}
\sum_{n=0}^\infty C_n z^n = \frac{1-\sqrt{1-4z}}{2z}.
\end{equation*}

Catalan numbers have a long history and play an extraordinary role in combinatorics.
Excellent sources on these numbers are the books by Koshy \cite{koshy2}, Roman \cite{roman} and Stanley \cite{stanley}.
Some examples of recent work involving Catalan numbers and their generalizations include \cite{barry,chammam,chu,goy,kim,lin,qiguo1,qiguo2,zhang}.

This paper was inspired by a recent paper by Amdeberhan et~al. \cite{amde}, who studied the function
\begin{equation*}
f(z) = \sum_{n=0}^\infty \frac{z^n}{C_n}, \qquad |z| < 4,
\end{equation*}
which generates the reciprocals of Catalan numbers. They prove by several methods that
\begin{equation}\label{amde1}
f(z) = \frac{2(z+8)}{(4-z)^2} + \frac{24\sqrt{z} \arcsin(\sqrt{z}/2)}{(4-z)^{5/2}}.
\end{equation}
This expression also appears in \cite{yinqi} and an equivalent form is given in \cite{koshygao}.
The function $f(z)$ was also studied in the recent paper \cite{adfrgo}, where special attention was paid to series with Catalan-Fibonacci entries. The hypergeometric expression for $f(z)$ is
\begin{equation*}\label{amde2}
f(z) = {}_{2}F_{1}\Big(1,2;\frac12;\frac{z}{4}\Big),
\end{equation*}
where
\begin{equation*}
{}_{2}F_{1}(a,b;c;z) = \sum_{n=0}^\infty \frac{(a)_n (b)_n}{(c)_n}\frac{z^n}{n!}
\end{equation*}
and $(a)_n=a(a+1)\cdots (a+n-1), (a)_0=1,$ is the Pochhammer symbol. 

Inserting $z=2$ and $z=3$, respectively, in \eqref{amde1} yield the pretty evaluations
\begin{equation*}
\sum_{n=0}^\infty \frac{2^n}{C_n} = 5 + \frac{3\pi}{2} \quad \mbox{and} \quad \sum_{n=0}^\infty \frac{3^n}{C_n} = 22 + 8\sqrt{3}\pi,
\end{equation*}
which were stated in 2014 by Beckwith and Harbor as Problem 11765 in the American Mathematical Monthly \cite{beckwith} and solved by Abel \cite{abel}.

From the paper \cite{stewart21} (see also \cite{adfrgo}) we 
have the identities
\begin{equation*}
\sum_{n=1}^\infty \frac{F_{2n}}{C_n} = \frac{22}{5} + \frac{6(5+9\sqrt5)\pi}{125} \,\omega \quad \mbox{and} \quad
\sum_{n=0}^\infty \frac{L_{2n}}{C_n} = \frac{62}{5} + \frac{6(15+19\sqrt5)\pi}{125} \,\omega,
\end{equation*}
where $F_n$ and $L_n$ are the famous Fibonacci and Lucas numbers, respectively,  $\alpha=\frac{1+\sqrt{5}}{2}$ is the golden ratio and $\omega=\sqrt{\sqrt5\alpha}=\sqrt{2+\alpha}$. These numbers are defined for $n\geq 0$ by the recursions $F_{n+2} = F_{n+1} + F_n$ and $L_{n+2} = L_{n+1} + L_n$ with initial conditions $F_0 = 0, F_1 = 1$, $L_0=2$ and $L_1=1$, respectively. The Binet formulas are given by
\begin{equation*}
F_n = \frac{\alpha^n-\beta^n}{\alpha-\beta}, \qquad L_n = \alpha^n + \beta^n,
\end{equation*}
where $\beta=-\frac{1}{\alpha}=\frac{1-\sqrt{5}}{2}$. For negative subscripts we have 
\begin{equation*}
F_{-n}=(-1)^{n+1} F_n\quad \mbox{and} \quad L_{-n}=(-1)^n L_n.
\end{equation*}
See the book by Koshy \cite{koshy1} for more details.

Our purpose in this paper is to study, for each integer $m\geq 0$, the following family of series
\begin{equation*}
g_m(z) = \sum_{n=0}^\infty \frac{2^{2n} n^m}{2n+1}\frac{z^n}{C_n}.
\end{equation*}
These series converge for all $|z|<1$. We begin with evaluating the functions $g_0(z)$ and $g_1(z)$ explicitly
for some values of $z$, including Fibonacci and Lucas numbers. Then, focusing on $g_m(z)$,
we prove some explicit expressions for $g_m(z)$, including two integral representations.
\section{The functions $g_0(z)$ and $g_1(z)$}
Sprugnoli \cite{sprug} has shown that
\begin{equation}\label{sprug_id}
\sum_{n=0}^\infty \frac{2^{2n} z^{n + 1}}{(2n+1)\binom {2n}{n}} = \sqrt{\frac{z}{1-z}} \arctan \Big (\sqrt{\frac{z}{1-z}}\Big ),
\qquad |z|<1.
\end{equation}
Since $\arctan\Big(\!\sqrt\frac{z}{1-z}\Big) = \arcsin{(\sqrt{z})}$, the above identity could be stated equivalently using the arcsine function as in \eqref{amde1}. In this paper, however,
we have decided to work with the notation used by Sprugnoli. Differentiating both sides of \eqref{sprug_id}
with respect to $z$ gives
\begin{equation*}\label{id_g0}
g_0(z) = \frac{1}{2(1-z)} + \frac{1}{2(1-z)^2}\frac{\arctan \left({\sqrt {\frac{z}{{1 - z}}}} \right)}{\sqrt{\frac{z}{1 - z}}}, \qquad |z|<1.
\end{equation*}
Differentiating once more and multiplying by $z$ gives
\begin{equation}\label{id_g1}
g_1(z) = \frac{2z+1}{4(1-z)^ 2} + \frac{4z-1}{4(1-z)^3}\frac{\arctan \Big (\sqrt{\frac{z}{1-z}}\Big )}{\sqrt{\frac{z}{1-z}}}, \qquad |z|<1.
\end{equation}
The trigonometric versions of $g_0(z)$ and $g_1(z)$ are also useful, namely,
\begin{equation*} 
g_{t,0}(z) = \sum_{n=0}^\infty \frac{2^{2n} \sin^{2n} z}{(2n + 1)C_n} =
\frac{1}{2} \Big ( \frac{1}{\cos^2 z} + \frac{z}{\cos^3 z\sin z}\Big ) = \frac{1}{2\cos^2z}\left(1+\frac{2z}{\sin 2z}\right),
\end{equation*}


\begin{equation}\label{id_t_g1}
g_{t,1}(z) = \sum_{n=0}^\infty \frac{2^{2n} n \sin ^{2n} z}{(2n + 1)C_n }
= \frac{1}{4\cos^4z} \left(2-\cos2z+(1-2\cos 2z)\frac{2z}{\sin 2z} \right),
\end{equation}
both valid for $|z| < \frac{\pi}{2}$. 

At $z=\frac{\pi}{3}$, $z=\frac{\pi}{4}$ and $z=\frac{\pi}{6}$ function $g_{t,0}(z)$, respectively, gives
\begin{equation*}
\sum_{n=0}^\infty \frac{3^n}{(2n + 1)C_n} = 2 + \frac{{8\pi \sqrt 3}}{9},\qquad
\sum_{n=0}^\infty \frac{2^n}{(2n + 1)C_n} = 1 + \frac{\pi }{2},
\end{equation*}


\begin{equation*}
\sum_{n=0}^\infty \frac{1}{(2n + 1)C_n} = \frac{2}{3} + \frac{{4\pi \sqrt 3}}{27}.
\end{equation*}
We also have from \eqref{id_g1} or \eqref{id_t_g1}
\begin{equation*}
\sum_{n=1}^\infty \frac{n}{(2n+1) C_n} = \frac{2}{3},\quad\qquad 
\sum_{n=1}^\infty \frac{(-1)^n n}{(2n+1)C_n} = \frac{2}{25} \Bigl( 1-\frac{16}{\sqrt5}\ln\alpha \Bigr),
\end{equation*}

\begin{equation}\label{specialLuc}
\sum_{n=1}^\infty \frac{2^{n} n}{(2n+1) C_n} = 2 + \frac{\pi}{2},\quad\qquad \sum_{n=1}^\infty \frac{(-1)^n n 2^{n} }{(2n+1)C_n} = \frac{\sqrt3}{9} \ln \bigl(2-\sqrt3\bigr),
\end{equation}

\begin{equation*}
\sum_{n=1}^\infty \frac{3^{n} n}{(2n+1) C_n} = 10 + \frac{32 \pi}{3\sqrt{3}},\quad\qquad \sum_{n=1}^\infty \frac{(-1)^n n 3^n}{(2n+1)C_n} = -\frac{2}{49} \left(1-\frac{16}{\sqrt{21}} \ln \Bigl(\frac{5-\sqrt{21}}{2}\Bigr) \right).
\end{equation*}
To offer some evaluations of $g_0(z)$ involving Fibonacci and Lucas numbers, we need two lemmas.
\begin{lemma}[{\cite[Lemma 1]{adegoke_inv}}, see also {\cite[p.~271, identities (20)--(22)]{srivastava12}}] \label{lem.iqkulim}
We have
\begin{equation*}
\sin \left( {\frac{\pi }{{10}}} \right) = - \frac{\beta }{2}, \quad \sin \left( {\frac{{3\pi }}{{10}}} \right) = \frac{\alpha }{2},
\quad \cos \left( {\frac{\pi }{{10}}} \right) = \frac{{\sqrt {\alpha \sqrt 5 } }}{2}, \quad
\cos \left( {\frac{{3\pi }}{{10}}} \right) = \frac{1}{2} \sqrt{\frac{5}{\alpha\sqrt5}}.
\end{equation*}
\end{lemma}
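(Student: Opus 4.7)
The plan is to exploit the angle relation $\pi/10 + 3\pi/10 \cdot \frac{2}{3}\cdot \frac{3}{2}$ \ldots more cleanly, the fact that $5 \cdot \pi/10 = \pi/2$, which yields a polynomial equation satisfied by $\sin(\pi/10)$. Setting $\theta = \pi/10$, I have $2\theta + 3\theta = \pi/2$, so $\sin(3\theta) = \cos(2\theta)$. Expanding via the standard identities $\sin(3\theta) = 3\sin\theta - 4\sin^3\theta$ and $\cos(2\theta) = 1 - 2\sin^2\theta$ and writing $x = \sin\theta$, I obtain the cubic
\begin{equation*}
4x^3 - 2x^2 - 3x + 1 = 0.
\end{equation*}
Since $x = 1$ is an obvious root (corresponding to the extraneous solution from $\sin(3\theta) = \cos(2\theta)$), I factor to get $(x-1)(4x^2 + 2x - 1) = 0$, and the quadratic factor supplies the relevant root $x = (\sqrt{5}-1)/4$, which is exactly $-\beta/2$. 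This proves the first identity.

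For $\sin(3\pi/10)$, I would use the complementary-angle relation $\sin(3\pi/10) = \cos(\pi/5)$ together with the double-angle formula $\cos(\pi/5) = 1 - 2\sin^2(\pi/10)$. Substituting the value of $\sin(\pi/10)$ already found gives
\begin{equation*}
\sin(3\pi/10) = 1 - 2 \cdot \frac{6 - 2\sqrt{5}}{16} = \frac{1+\sqrt{5}}{4} = \frac{\alpha}{2},
\end{equation*}
as claimed.

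The two cosine values follow from the Pythagorean identity $\cos^2 + \sin^2 = 1$, giving
\begin{equation*}
\cos^2(\pi/10) = \frac{5+\sqrt{5}}{8}, \qquad \cos^2(3\pi/10) = \frac{5-\sqrt{5}}{8}.
\end{equation*}
To match the stated forms, the only step that requires a small manipulation is rewriting these in terms of $\alpha$. For the first, $\alpha\sqrt{5}/4 = (\sqrt{5} + 5)/8$, which matches $\cos^2(\pi/10)$; taking the positive square root yields $\cos(\pi/10) = \sqrt{\alpha\sqrt{5}}/2$. For the second, using $1/\alpha = -\beta = (\sqrt{5}-1)/2$ gives $\sqrt{5}/(4\alpha) = (5-\sqrt{5})/8$, i.e.\ $5/(4\alpha\sqrt{5}) = (5-\sqrt{5})/8$, so $\cos(3\pi/10) = \tfrac12 \sqrt{5/(\alpha\sqrt{5})}$.

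None of the steps poses a genuine obstacle; the only mild subtlety is the observation that $5\theta = \pi/2$ converts trigonometry at $\pi/10$ into a polynomial identity, and that the extraneous factor $(x-1)$ must be removed before the quadratic gives the correct surd. The remaining work is the algebraic verification that the square-root expressions in $\alpha$ and $\sqrt{5}$ coincide with the values $(5 \pm \sqrt{5})/8$ obtained from the Pythagorean identity.
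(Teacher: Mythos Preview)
Your argument is correct and complete: the reduction via $5\theta=\pi/2$ to the cubic $4x^3-2x^2-3x+1=0$, the factoring out of $(x-1)$, and the subsequent Pythagorean and double-angle computations all check out, including the algebraic identifications $\alpha\sqrt{5}/4=(5+\sqrt{5})/8$ and $\sqrt{5}/(4\alpha)=(5-\sqrt{5})/8$.

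As for comparison with the paper: the paper does not prove this lemma at all. It is stated with a reference to \cite{adegoke_inv} and \cite{srivastava12} and then simply used. So you have supplied a self-contained elementary proof where the paper relies on an external citation. Your approach is the standard textbook derivation of the exact values of the trigonometric functions at $\pi/10$ and $3\pi/10$; it makes the lemma independent of outside sources, at the cost of a short paragraph of routine algebra. Either choice is fine for a result this classical.
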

\begin{theorem}
With $\omega=\sqrt{\sqrt5\alpha}=\sqrt{2+\alpha}$ we have for any integer $s$,

\begin{equation*}
5\sum_{n=0}^\infty \frac{F_{2n+s}}{(2n+1)C_n} = 2L_{s+1} + \frac{4\sqrt5\pi\,\omega}{25}\big((2+2\alpha)F_s + (4-\alpha)F_{s-1}\big)
\end{equation*}


\begin{equation*}
\sum_{n=0}^\infty \frac{L_{2n+s}}{(2n+1)C_n} = 2F_{s+1} + \frac{4\sqrt5\pi\,\omega}{25}\left(2F_s + \alpha F_{s-1}\right).
\end{equation*}

\end{theorem}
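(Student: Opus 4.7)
The plan is to specialize the trigonometric identity
\[
g_{t,0}(z) = \sum_{n=0}^{\infty} \frac{2^{2n}\sin^{2n}z}{(2n+1)C_n} = \frac{1}{2\cos^2 z}\Bigl(1+\frac{2z}{\sin 2z}\Bigr)
\]
at the two angles supplied by Lemma~\ref{lem.iqkulim}. Since $\sin(3\pi/10)=\alpha/2$ and $\sin(\pi/10)=-\beta/2$, the even powers satisfy $2^{2n}\sin^{2n}(3\pi/10)=\alpha^{2n}$ and $2^{2n}\sin^{2n}(\pi/10)=\beta^{2n}$. Therefore
\[
A := g_{t,0}(3\pi/10) = \sum_{n=0}^\infty \frac{\alpha^{2n}}{(2n+1)C_n}, \qquad B := g_{t,0}(\pi/10) = \sum_{n=0}^\infty \frac{\beta^{2n}}{(2n+1)C_n}.
\]
Applying Binet's formulas to $F_{2n+s}$ and $L_{2n+s}$ and factoring out $\alpha^s$ and $\beta^s$ gives the pair of identities
\[
\sqrt{5}\sum_{n=0}^\infty \frac{F_{2n+s}}{(2n+1)C_n} = \alpha^s A - \beta^s B, \qquad \sum_{n=0}^\infty \frac{L_{2n+s}}{(2n+1)C_n} = \alpha^s A + \beta^s B.
\]

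Next, I would compute $A$ and $B$ in closed form. From Lemma~\ref{lem.iqkulim}, $\cos^2(3\pi/10)=\sqrt5/(4\alpha)$ and $\cos^2(\pi/10)=\omega^2/4=\alpha\sqrt5/4$, while $\sin(3\pi/5)=\cos(\pi/10)=\omega/2$ and $\sin(\pi/5)=\cos(3\pi/10)=\sqrt5/(2\omega)$. Substituting into the trigonometric formula and using $\omega^2=\alpha\sqrt5$ to clear $1/\omega$ factors yields
\[
A = \frac{2\alpha}{\sqrt5} + \frac{12\pi\omega}{25}, \qquad B = -\frac{2\beta}{\sqrt5} - \frac{4\pi\omega\beta}{25},
\]
where the $B$ expression uses $1/\alpha=-\beta$.

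The remaining step is algebraic simplification. Combining the expressions for $A$ and $B$, one obtains
\[
\alpha^s A \pm \beta^s B = \frac{2(\alpha^{s+1}\mp\beta^{s+1})}{\sqrt5} + \frac{4\pi\omega}{25}\bigl(3\alpha^s \pm \beta^{s+1}\bigr),
\]
which already produces the $2L_{s+1}$ (for the $-$ case, after multiplying by $\sqrt5$) and $2F_{s+1}$ (for the $+$ case) leading terms. It remains to rewrite $3\alpha^s\pm\beta^{s+1}$ via the standard expansion $\alpha^s=\alpha F_s+F_{s-1}$, $\beta^s=\beta F_s+F_{s-1}$, and to simplify the scalar coefficients by means of $\alpha+\beta=1$, $\alpha-\beta=\sqrt5$, $\alpha\beta=-1$, and $\alpha^2=\alpha+1$. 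The key numerical identities one needs are $3\alpha-\beta-1=2\sqrt5$, $3-\beta=2+\alpha=\sqrt5\,\alpha$, and $3\alpha+\beta+1=2+2\alpha$, yielding precisely the stated coefficients $(2+2\alpha)F_s+(4-\alpha)F_{s-1}$ and $\sqrt5\bigl(2F_s+\alpha F_{s-1}\bigr)$.

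The only real obstacle is bookkeeping: one must keep track of where the extra $\sqrt5$ factor in $\frac{4\sqrt5\pi\omega}{25}$ comes from — it arises in the Lucas identity through the identity $3-\beta=\sqrt5\alpha$, and in the Fibonacci identity through the final multiplication by $\sqrt5$. Aside from this, the whole argument is a direct specialization of $g_{t,0}(z)$ followed by routine golden-ratio algebra.
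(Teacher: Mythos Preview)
Your proposal is correct and follows essentially the same route as the paper: specialize $g_{t,0}$ at $z=3\pi/10$ and $z=\pi/10$ via Lemma~\ref{lem.iqkulim}, form $\alpha^sA\pm\beta^sB$, and simplify with Binet and golden-ratio identities. The only slip is a sign typo in your combined display (the second bracket should read $3\alpha^s\mp\beta^{s+1}$, not $\pm$), but your subsequent identities $3\alpha-\beta-1=2\sqrt5$, $3-\beta=\sqrt5\alpha$, $3\alpha+\beta+1=2+2\alpha$ show you are in fact using the correct signs, and the argument goes through.
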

\begin{proof}
Set $z=\frac{3\pi}{10}$ in $g_{t,0}(z)$ and multiply through by $\alpha^s$, where $s$ is an arbitrary integer.
Using Lemma \ref{lem.iqkulim} yields
\begin{equation*}
\sum_{n=0}^\infty \frac{\alpha^{2n+s}}{(2n + 1)C_n} = \frac{2}{\sqrt5}\alpha^{s+1} + \frac{12\pi}{\sqrt{125\sqrt5}}\alpha^{s}\sqrt{\alpha}.
\end{equation*}
In like manner, set $z=\frac{\pi}{10}$ in $g_{t,0}(z)$ and multiply through by $\beta^s$ to obtain
\begin{equation*}
\sum_{n = 0}^\infty \frac{\beta^{2n + s}}{(2n + 1)C_n} = -\frac{2}{\sqrt5}\beta^{s+1} + \frac{4\pi}{\sqrt{125\sqrt5}}\frac{\beta^s}{\sqrt{\alpha}}.
\end{equation*}
The difference and sum of the above identities result in the identities
\begin{equation*}
\sum_{n = 0}^\infty  {\frac{{F_{2n + s}}}{{(2n + 1)C_n }}} = \frac{2}{5}L_{s + 1} + \frac{{4\pi }}{{25\sqrt {\alpha \sqrt 5 } }}
(\alpha L_{s + 2} + L_{s - 1} ),
\end{equation*}

\begin{equation*}
\sum_{n = 0}^\infty  {\frac{{L_{2n + s}}}{{(2n + 1)C_n }}} = 2F_{s + 1} + \frac{{4\pi }}{{\sqrt {125\alpha \sqrt 5 } }}(\alpha (F_{s + 1}
+ L_s ) + F_s + L_{s + 1} ).
\end{equation*}
The stated identities follow upon simplifications.
\end{proof}

As examples, we have with 
$\omega=\sqrt{\sqrt5\alpha}$\,:

\begin{equation*}
\sum_{n=1}^\infty \frac{F_{2n}}{(2n+1)C_n} = \frac{2}{5} + \frac{4(7\alpha-6)\pi\omega}{125},\qquad\quad \sum_{n=1}^\infty \frac{L_{2n}}{(2n+1)C_n} = \frac{4(\alpha+2)\pi\omega}{25},
\end{equation*}

\begin{equation*}
\sum_{n=1}^\infty \frac{F_{2n+1}}{(2n+1)C_n} = \frac{1}{5} + \frac{8(3\alpha+1)\pi\omega}{125},\qquad\quad \sum_{n=1}^\infty \frac{L_{2n+1}}{(2n+1)C_n} = 1 + \frac{8\sqrt5\pi\omega}{25},
\end{equation*}

\begin{equation*}
\sum_{n=1}^\infty \frac{F_{2n-2}}{(2n+1)C_n} = \frac{3}{5} + \frac{8(4\alpha-7)\pi\omega}{125},\qquad\quad
\sum_{n=1}^\infty \frac{L_{2n-2}}{(2n+1)C_n} = -1 + \frac{8(3-\alpha)\pi\omega}{25}.
\end{equation*}

Using the same idea for $g_{t,1}(z)$ we can prove the next theorem, those proof we omit.
\begin{theorem}
With $\omega=\sqrt{\sqrt5\alpha}=\sqrt{2+\alpha}$ we have for any integer $s$,

\begin{equation*}
\sum_{n=1}^\infty \frac{n F_{2n+s}}{(2n+1)C_n} = 2F_{s+1} + \frac{8}{5} F_{s}
+ \frac{8\sqrt5\pi\,\omega}{125}\left(\sqrt5\alpha F_{s+1} + 2F_{s} \right)
\end{equation*}

and

\begin{equation*}
\sum_{n=1}^\infty \frac{n L_{2n+s}}{(2n+1)C_n} = 2L_{s+1} + \frac{8}{5} L_{s} + \frac{8\sqrt5\pi\,\omega}{125}\left((6+\alpha)F_{s+1}+2\alpha^2F_{s}\right).
\end{equation*}

\end{theorem}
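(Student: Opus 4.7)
The plan is to follow the template of the preceding theorem, but with the trigonometric form \eqref{id_t_g1} of $g_{t,1}(z)$ in place of $g_{t,0}(z)$. By Lemma~\ref{lem.iqkulim} one has $2^{2n}\sin^{2n}(3\pi/10)=\alpha^{2n}$ and $2^{2n}\sin^{2n}(\pi/10)=\beta^{2n}$, so specializing \eqref{id_t_g1} at $z=3\pi/10$ and $z=\pi/10$ produces closed forms for $\sum_{n\geq 1}\frac{n\alpha^{2n}}{(2n+1)C_n}$ and $\sum_{n\geq 1}\frac{n\beta^{2n}}{(2n+1)C_n}$. Multiplying these by $\alpha^s$ and $\beta^s$ respectively, and then forming the Binet combinations (the difference divided by $\sqrt 5$ for the Fibonacci identity, the sum for the Lucas identity) should produce the two stated formulas, exactly as in the previous proof.

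Evaluating the right-hand side of \eqref{id_t_g1} at these two arguments is routine bookkeeping once one has the auxiliary values $\cos(\pi/5)=\alpha/2$, $\cos(3\pi/5)=\beta/2$, $\sin(\pi/5)=\omega/(2\alpha)$, $\sin(3\pi/5)=\omega/2$, $\cos^4(\pi/10)=5\alpha^2/16$ and $\cos^4(3\pi/10)=5/(16\alpha^2)$. Each follows from Lemma~\ref{lem.iqkulim} via double-angle identities and the golden-ratio relations $\alpha^2=\alpha+1$, $\alpha\beta=-1$, $\omega^2=\alpha\sqrt 5$. After substitution, the non-$\pi$ parts combine through the Binet formulas to the clean constants $2F_{s+1}+\frac{8}{5}F_s$ and $2L_{s+1}+\frac{8}{5}L_s$.

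The main obstacle is the $\pi$-contribution. As it first appears it carries a factor $1/\omega$; applying $\omega^2=\alpha\sqrt 5$ converts this into $\omega/(\alpha\sqrt 5)$ and produces the advertised prefactor $\frac{8\sqrt 5\pi\omega}{125}$. What remains are finite algebraic identities in $\mathbb{Z}[\alpha]$ of the form $3\alpha^{s+3}\pm\beta^{s+2}=(\text{the stated Binet combination})$, which I would verify using $1/\alpha=-\beta$, Binet's formulas, and $\alpha^2=\alpha+1$, separating rational and $\sqrt 5$ components. This algebraic repackaging into the combinations $\sqrt 5\,\alpha F_{s+1}+2F_s$ and $(6+\alpha)F_{s+1}+2\alpha^2 F_s$ is the fiddly but finite final step, and justifies the authors' choice to omit the detailed proof.
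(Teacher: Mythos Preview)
Your proposal is correct and follows exactly the approach the paper indicates (``Using the same idea for $g_{t,1}(z)$''): specialize \eqref{id_t_g1} at $z=3\pi/10$ and $z=\pi/10$, multiply by $\alpha^s$ and $\beta^s$, and take the Binet combinations, with the algebraic cleanup you describe. The auxiliary trigonometric values you list are all consequences of Lemma~\ref{lem.iqkulim} and the golden-ratio relations, so nothing is missing.
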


Another interesting example for an evaluation of $g_1(z)$ with Fibonacci (Lucas) entries is the following result.
\begin{theorem}\label{thm.xa8v8d3}
Let $r$ be an even integer and $s$ any integer. Then

\begin{align*}
\sum_{n = 1}^\infty  {\frac{{2^{2n} nF_{rn + s} }}{(2n + 1){L_r^n C_n }}}  &
= \left( {F_{3r + s}  + \frac{{L_r }}{2}F_{2r + s} } \right)\frac{{L_r }}{2}+\left( {L_r L_{2r + s}  - 4L_{3r + s} } \right)\frac{{L_r^2 }}{{4\sqrt 5 }}\arctan (\beta ^r )\\
&\quad\,\,+ \left( {4L_{3r + s}  - L_r L_{2r + s}  + (4F_{3r + s}  - L_r F_{2r + s} )\sqrt 5 } \right)\frac{{\pi L_r^2 }}{{16\sqrt 5 }},
\end{align*}

\begin{align*}
\sum_{n = 1}^\infty  {\frac{{2^{2n} nL_{rn + s} }}{(2n + 1){L_r^n C_n }}}  &
= \left( {L_{3r + s}  + \frac{{L_r }}{2}L_{2r + s} } \right)\frac{{L_r }}{2} + \left( {L_r F_{2r + s}  - 4F_{3r + s} } \right)\frac{{L_r^2\sqrt 5 }}{{4}}\arctan (\beta ^r )\\
&\quad\,\,+ \left( {4L_{3r + s}  - L_r L_{2r + s}  + (4F_{3r + s}  - L_r F_{2r + s} )\sqrt 5 } \right)\frac{{\pi L_r^2 }}{{16}}.
\end{align*}

\end{theorem}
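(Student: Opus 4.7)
The plan is to use Binet's formulas to reduce the two target series to evaluations of $g_1$ at the points $\alpha^r/L_r$ and $\beta^r/L_r$. Writing $F_{rn+s}=\frac{1}{\sqrt5}(\alpha^{rn+s}-\beta^{rn+s})$, the Fibonacci-weighted series becomes
\begin{equation*}
\sum_{n=1}^\infty \frac{2^{2n}nF_{rn+s}}{(2n+1)L_r^nC_n}=\frac{1}{\sqrt5}\Bigl(\alpha^sg_1(\alpha^r/L_r)-\beta^sg_1(\beta^r/L_r)\Bigr),
\end{equation*}
and the analogous identity with a plus sign (and no $\sqrt5$) holds for the Lucas series; the $n=0$ contribution vanishes because of the factor $n$. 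For $r$ even we have $0<\alpha^r/L_r,\beta^r/L_r<1$, so both arguments lie strictly inside the disc of convergence of $g_1$ and formula \eqref{id_g1} may be applied.

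Two clean observations then drive the whole computation. First, $\alpha^r+\beta^r=L_r$ gives $1-\alpha^r/L_r=\beta^r/L_r$, and combined with $\alpha\beta=-1$ (with $r$ even) this yields $\alpha^r/\beta^r=\alpha^{2r}$, so that $\sqrt{z/(1-z)}=\alpha^r$ when $z=\alpha^r/L_r$, and analogously $\sqrt{z/(1-z)}=\beta^r$ when $z=\beta^r/L_r$. Second, since $\alpha^r\beta^r=1$ with both factors positive, the reciprocal identity for arctangent gives
\begin{equation*}
\arctan(\alpha^r)=\frac{\pi}{2}-\arctan(\beta^r),
\end{equation*}
which is precisely what produces the split between the $\arctan(\beta^r)$ term and the free $\pi$ term in the statement.

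Substituting these simplifications into \eqref{id_g1} yields, for each argument, a rational piece plus an $\arctan$ piece; after applying the reciprocal identity, the $\arctan(\beta^r)$ contributions from $g_1(\alpha^r/L_r)$ and $g_1(\beta^r/L_r)$ combine, while the stray $\pi$ contribution comes solely from $g_1(\alpha^r/L_r)$. The rational and $\arctan(\beta^r)$ pieces collapse, via $\alpha^k-\beta^k=F_k\sqrt5$ and $\alpha^k+\beta^k=L_k$, into the coefficients $F_{3r+s}+\frac{L_r}{2}F_{2r+s}$ and $L_rL_{2r+s}-4L_{3r+s}$ advertised in the statement, while $2\alpha^k=L_k+F_k\sqrt5$ splits the surviving combination $4\alpha^{3r+s}-L_r\alpha^{2r+s}$ into the asymmetric expression $(4L_{3r+s}-L_rL_{2r+s})+(4F_{3r+s}-L_rF_{2r+s})\sqrt5$ that appears in the $\pi$-term. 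The Lucas version is entirely parallel, using the sum instead of the difference and omitting the $1/\sqrt5$.

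The main obstacle is purely bookkeeping: three classes of terms (rational, $\arctan(\beta^r)$, and $\pi$) must be tracked simultaneously through the closed form \eqref{id_g1} and then recombined using the Binet relations. No new analytic input beyond \eqref{id_g1} and the reciprocal arctan identity is needed, so once the pattern is organized, the rest is a disciplined simplification.
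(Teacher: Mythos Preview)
Your proposal is correct and follows essentially the same route as the paper: evaluate $\alpha^s g_1(\alpha^r/L_r)\mp\beta^s g_1(\beta^r/L_r)$ via \eqref{id_g1}, use $1-\alpha^r/L_r=\beta^r/L_r$ and $\sqrt{z/(1-z)}=\alpha^r$ (resp.\ $\beta^r$), apply $\arctan\alpha^r=\frac{\pi}{2}-\arctan\beta^r$, and simplify with the Binet formulas.
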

\begin{proof}
First note that $1-\alpha^r/L_r=\beta^r/L_r$ and that if $r$ is an even integer, then

\begin{equation*}
\sqrt{\frac{\alpha^r/L_r}{1-\alpha^r/L_r}} = \sqrt{\frac{\alpha^r}{L_r-\alpha^r}} = \alpha^r.
\end{equation*}
Let $s$ be an arbitrary integer. Consider $\alpha ^s g_1 (\alpha ^r /L_r ) \mp \beta ^s g_1 (\beta ^r /L_r )$. We have
\begin{align*}
&\sum_{n = 1}^\infty  {\frac{{2^{2n} n(\alpha ^{rn +s}  \mp \beta ^{rn + s} )}}{{L_r^n (2n + 1)C_n }}}\\
&\qquad= (\alpha ^{3r + s}  \mp \beta ^{3r + s} )\frac{{L_r }}{2} + (\alpha ^{2r + s}  \mp \beta ^{2r + s} )\frac{{L_r^2 }}{4}
+ \left( {\frac{{4\alpha ^{2r + s} }}{{\beta ^r }} - \frac{{\alpha ^{r + s} }}{{\beta ^r }}L_r } \right)\frac{{L_r^2 }}{4}\arctan (\alpha ^r )\\
&\quad\qquad\,\mp \left( {\frac{{4\beta ^{2r + s} }}{{\alpha ^r }} - \frac{{\beta ^{r + s} }}{{\alpha ^r }}L_r } \right)\frac{{L_r^2 }}{4}\arctan (\beta ^r ),
\end{align*}

from which the stated identities in the theorem follow upon using the Binet formulas and the fact that $\arctan\alpha^r=\frac{\pi}{2}-\arctan\beta^r$ for any even integer $r$.
\end{proof}

As particular instances of Theorem \ref{thm.xa8v8d3}, we have, for even integer $r$,

\begin{equation*}
\sum_{n = 1}^\infty  {\frac{{2^{2n + 1} nF_{(n - 2)r} }}{{(2n + 1)L_r^n C_n }}}  = F_{2r}  + (L_r  + 2F_r \sqrt 5 )\frac{{\pi L_r^2 }}{{4\sqrt 5 }} - \frac{L_r^3} {\sqrt 5} \arctan \beta ^r,
\end{equation*}

\begin{equation}\label{Luc1}
\sum_{n = 1}^\infty  {\frac{{2^{2n} nL_{(n - 2)r} }}{{(2n + 1)L_r^n C_n }}}  = L_r^2  + (L_r  + 2F_r \sqrt 5 )\frac{{\pi L_r^2 }}{8} - F_r L_r^2 \sqrt 5 \arctan \beta ^r,
\end{equation}

\begin{align*}
\sum_{n = 1}^\infty  {\frac{{2^{2n} nF_{(n - 3)r} }}{{(2n + 1)L_r^n C_n }}} & = - \frac{1}{4}L_r^2 F_r + \left( {8 -  L_r^2  + F_{2r} \sqrt 5 } \right)\frac{{\pi L_r^2 \sqrt 5 }}{{80}}\\
&\quad\,+ \left( { L_r^2  - 8} \right)\frac{{L_r^2 \sqrt 5 \arctan \beta ^r }}{{20}},
\end{align*}

\begin{align}\label{Luc2}
\sum_{n = 1}^\infty  {\frac{{2^{2n} nL_{(n - 3)r} }}{{(2n + 1)L_r^n C_n }}}  &= \left( {2 + \frac{1}{2}L_r^2 } \right)\frac{L_r}2  + \left( {8 - ( - 1)^r L_r^2  + F_{2r} \sqrt 5 } \right)\frac{{\pi L_r^2 }}{{16}}\notag\\
&\quad\,- \frac{1}{4}L_r^3 F_r \sqrt 5 \arctan \beta ^r.
\end{align}

Note that both \eqref{Luc1} and \eqref{Luc2} give \eqref{specialLuc} when $r=0$.

\section{Integral expressions for $g_0(z)$ and $g_1(z)$}

\begin{lemma}\label{int}
For each integer $n\geq 0$, we have

\begin{equation*}
\int_{-1}^1 (1-x^2)^n dx = \frac{2^{2n+1}}{(2n+1) \binom {2n}{n}}.
\end{equation*}
\end{lemma}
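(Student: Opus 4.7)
The plan is to reduce this to a classical Wallis-type integral and then convert the answer into the stated form involving a central binomial coefficient. Setting $x = \sin\theta$ converts $\int_{-1}^1(1-x^2)^n\,dx$ into $\int_{-\pi/2}^{\pi/2}\cos^{2n+1}\theta\,d\theta$, which by evenness of the integrand equals $2\int_0^{\pi/2}\cos^{2n+1}\theta\,d\theta$. The classical Wallis formula evaluates this as $\frac{2^{2n+1}(n!)^2}{(2n+1)!}$, and using the identity $(2n+1)! = (2n+1)\binom{2n}{n}(n!)^2$ rewrites it as $\frac{2^{2n+1}}{(2n+1)\binom{2n}{n}}$, which is the desired right-hand side.

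A self-contained alternative that avoids invoking Wallis is induction on $n$. Writing $I_n$ for the integral and applying integration by parts with $u = (1-x^2)^n$ and $dv = dx$, the boundary term vanishes and one is left with $I_n = 2n\int_{-1}^1 x^2(1-x^2)^{n-1}\,dx$. Splitting $x^2 = 1 - (1-x^2)$ in the integrand produces the two-term recursion $(2n+1)I_n = 2n\,I_{n-1}$ with initial value $I_0 = 2$. A direct check shows that the candidate right-hand side $a_n = \frac{2^{2n+1}}{(2n+1)\binom{2n}{n}}$ satisfies the same recursion $a_n = \frac{2n}{2n+1}a_{n-1}$ with $a_0 = 2$, so $I_n = a_n$ by induction.

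There is no real obstacle here; the only bookkeeping is the conversion between double factorials and the central binomial coefficient, via $(2n+1)!! = (2n+1)!/(2^n n!)$ and $(2n)!! = 2^n n!$, and neither approach requires anything beyond elementary calculus.
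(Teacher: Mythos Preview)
Your proposal is correct, and your integration-by-parts argument is precisely the approach the paper has in mind: the paper does not spell out a proof but simply remarks that the result is known and ``can be proved easily using integration by parts,'' which is exactly your second method (the recursion $(2n+1)I_n = 2nI_{n-1}$ with $I_0=2$). Your first route via the substitution $x=\sin\theta$ and the Wallis integral is a legitimate alternative that the paper does not mention; it trades the short recursion for an appeal to a classical closed form, but both arguments are elementary and of comparable length.
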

The result is known. It can be proved easily using integration by parts.
It is, however, a special case of the more interesting fact \cite[p.~52]{koshy2} that
\begin{equation*}
\int_a^b(x-a)^n(b-x)^n dx = 2\cdot\frac{2\cdot4\cdot6\cdots(2n)}{3\cdot5\cdot7\cdots(2n+1)}\cdot\left(\frac{b-a}{2}\right)^{2n+1}, \qquad n\geq1.
\end{equation*}
\begin{theorem}\label{int_g01}
We have
\begin{equation*}\label{int_id_g0}
g_0(z) = \frac{1}{2} \int_{-1}^1 \frac{1}{(1-z(1-x^2))^2} dx,
\end{equation*}


\begin{equation*}\label{int_id_g1}
g_1(z) = z \int_{-1}^1 \frac{1-x^2}{(1-z(1-x^2))^3} dx.
\end{equation*}
\end{theorem}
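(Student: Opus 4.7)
The plan is to prove both identities by recognizing the integrand as a generating function in disguise: expand the power $(1-z(1-x^2))^{-k}$ as a binomial series in the variable $u=z(1-x^2)$, interchange summation and integration, then apply Lemma \ref{int} to collapse the integrals into Catalan-number expressions.

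For the first identity, observe that for fixed $x\in[-1,1]$ we have $|z(1-x^2)|\le|z|<1$, so the generalized binomial theorem gives
\begin{equation*}
\frac{1}{(1-z(1-x^2))^{2}} = \sum_{n=0}^{\infty}(n+1)\,z^{n}(1-x^{2})^{n}.
\end{equation*}
The series converges uniformly on $[-1,1]$ (for fixed $z$ with $|z|<1$), so we may interchange sum and integral. Applying Lemma \ref{int} and then using $\binom{2n}{n}=(n+1)C_{n}$ yields
\begin{equation*}
\frac12\int_{-1}^{1}\frac{dx}{(1-z(1-x^2))^{2}}
=\sum_{n=0}^{\infty}\frac{(n+1)\,2^{2n}}{(2n+1)\binom{2n}{n}}z^{n}
=\sum_{n=0}^{\infty}\frac{2^{2n}}{(2n+1)C_{n}}z^{n}=g_{0}(z).
\end{equation*}

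For the second identity I would use the analogous expansion
\begin{equation*}
\frac{1}{(1-z(1-x^2))^{3}}=\sum_{n=0}^{\infty}\binom{n+2}{2}z^{n}(1-x^{2})^{n},
\end{equation*}
multiply by the extra factor $(1-x^{2})$, interchange sum and integral, and apply Lemma \ref{int} with index $n+1$. After a shift of summation index $m=n+1$, the resulting coefficient is $\tfrac{m(m+1)\,2^{2m}}{(2m+1)\binom{2m}{m}}$, which simplifies via $\binom{2m}{m}=(m+1)C_m$ to $\tfrac{m\,2^{2m}}{(2m+1)C_m}$, exactly the coefficient defining $g_{1}(z)$.

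The only nontrivial point is the justification of the term-by-term integration, which follows from uniform convergence of the binomial series on $[-1,1]$ for any fixed $z$ with $|z|<1$; the rest is bookkeeping between $\binom{2n}{n}$ and $C_n$ together with an index shift. I do not anticipate a serious obstacle.
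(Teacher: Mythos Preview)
Your proof is correct and follows essentially the same approach as the paper: both arguments combine Lemma~\ref{int} with a power-series expansion of the integrand and termwise integration. The only cosmetic difference is that the paper first establishes the integral identity for the Sprugnoli series $\sum 2^{2n}z^{n+1}/((2n+1)\binom{2n}{n})$ via the geometric series and then differentiates (once for $g_0$, then applies $z\,d/dz$ for $g_1$), whereas you expand $(1-z(1-x^2))^{-2}$ and $(1-z(1-x^2))^{-3}$ directly by the generalized binomial theorem---in effect performing the differentiation before rather than after the interchange of sum and integral.
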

\begin{proof}
From the geometric series and the above lemma we deduce that for all $|z|<1$
\begin{equation*}
\sum_{n=0}^\infty \frac{2^{2n} z^{n+1}}{(2n+1)\binom {2n}{n}} = \frac{1}{2} \int_{-1}^1 \frac{z}{1-z(1-x^2)} dx.
\end{equation*}
Differentiating produces the first equation. To obtain the second equation,
perform the operation $z (d/dz) g_0(z)$ and the proof is completed.
\end{proof}
It is interesting to compare the integral expressions for $g_0(z)$ and $g_1(z)$ with that for $f(z)$.
This expression is not stated explicitly in \cite{amde} but can be derived as follows:
\begin{align*}
\sum_{n=1}^\infty \frac{z^n}{C_n} & = \sum_{n=1}^\infty \frac{(n+1)}{\binom {2n}{n}} z^n \\
& = \sum_{n=1}^\infty \frac{n(n+1) \Gamma(n) \Gamma(n+1)}{\Gamma(2n+1)} z^n \\
& = \sum_{n=1}^\infty n(n+1) B(n,n+1) z^n \\
& = \sum_{n=1}^\infty n(n+1) \int_0^1 (1-x)^n x^{n-1} dx\, z^n \\
& = 2z \int_0^1 \frac{1-x}{(1-zx(1-x))^3} dx,
\end{align*}
where in the derivation $\Gamma(x)$ is the gamma function and $B(a,b)$ is the beta function
\begin{equation*}
B(a,b) = \int_0^1 (1-x)^{a-1} x^{b-1} dx = \frac{\Gamma(a) \Gamma(b)}{\Gamma(a+b)}.
\end{equation*}
This proves that
\begin{equation*}
f(z) = 1 + 2z \int_0^1 \frac{1-x}{(1-zx(1-x))^3} dx.
\end{equation*}
Finally, it is worth mentioning that by the binomial theorem the integral in Lemma \ref{int} can be also evaluated as
\begin{equation*}
\int_{-1}^1 (1-x^2)^n dx = \int_{-1}^1 \sum_{k=0}^n \binom {n}{k} (-1)^k x^{2k} dx = 2 \sum_{k=0}^n \binom {n}{k} \frac{(-1)^k}{2k+1}.
\end{equation*}

Hence,

\begin{align*}
\sum_{n=0}^\infty \frac{2^{2n} z^{n}}{(2n+1)\binom {2n}{n}} & =  \sum_{n=0}^\infty \sum_{k=0}^n \binom {n}{k} (-1)^k \frac{z^n}{2k+1} \\
& =  \sum_{k=0}^\infty \frac{(-1)^k}{2k+1} \sum_{n=k}^\infty \binom {n}{k} z^n \\
& = \sum_{k=0}^\infty \frac{(-1)^k z^k}{2k+1} \sum_{n=0}^\infty \binom {n+k}{k} z^n \\
& = \sum_{k=0}^\infty \frac{(-1)^k z^k}{2k+1} \sum_{n=0}^\infty \binom {n+k}{n} z^n \\
& = \sum_{k=0}^\infty \frac{(-1)^k z^k}{2k+1} \frac{1}{(1-z)^{k+1}}.
\end{align*}

This shows that

\begin{equation*}
\sum_{n=0}^\infty \frac{2^{2n} z^{n+1}}{(2n+1)\binom {2n}{n}} = \sum_{n=0}^\infty \frac{(-1)^n}{2n+1} \Big (\frac{z}{1-z}\Big )^{n+1},
\end{equation*}

and provides a new proof of Sprugnoli's identity \eqref{sprug_id}. 
Moreover,

\begin{equation*}
g_0(z) = \frac{1}{(1-z)^2} \sum_{n=0}^\infty \frac{(-1)^n(n+1)}{2n+1} \Big ( \frac{z}{1-z}\Big )^n,
\end{equation*}


\begin{equation*}
g_1(z) = \frac{1}{(1-z)^3} \sum_{n=0}^\infty \frac{(-1)^n(n+1)}{2n+1}(2z+n)\Big(\frac{z}{1-z}\Big )^n.
\end{equation*}

\section{Some general properties of $g_m(z)$}

In this section, we present some general properties of $g_m(z)$ which is defined by

\begin{equation*}
g_m(z) = \sum_{n=0}^\infty \frac{2^{2n} n^m}{2n+1}\frac{z^n}{C_n}, \qquad |z|<1,
\end{equation*}

with $m\geq 0$ being an integer. As  we have the following result:
\begin{theorem}\label{main1_gm}
For each $m\geq 0$ and all $|z|<1$, $g_m(z)$ possesses the representation

\begin{equation}\label{gm_id1}
g_m(z) = \frac{P_{1,m}(z)}{(1-z)^{m+1}} + \frac{P_{2,m}(z)}{(1-z)^{m+2}} \frac{\arctan \Big (\sqrt{\frac{z}{1-z}}\Big )}{\sqrt{\frac{z}{1-z}}},
\end{equation}

where $P_{1,m}(z)$ and $P_{2,m}(z)$ are polynomials in $z$ of degree $m$ with rational coefficients.
Moreover, for $m\geq 1$ the polynomials $P_{1,m}(z)$ and $P_{2,m}(z)$ can be expressed recursively according to

\begin{equation}\label{rec1}
P_{1,m}(z)  =  z(1-z)\frac{d}{dz} P_{1,m-1}(z) + m z P_{1,m-1}(z) + \frac{1}{2} P_{2,m-1}(z) \end{equation}

and

\begin{equation}\label{rec2}
P_{2,m}(z)  =  z(1-z)\frac{d}{dz} P_{2,m-1}(z) + (m+1)z P_{2,m-1}(z) - \frac{1}{2} P_{2,m-1}(z), 
\end{equation}

with $P_{1,0}(z)=P_{2,0}(z)=\frac12$.
\end{theorem}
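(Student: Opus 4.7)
The plan is to prove the theorem by induction on $m$. The base case $m=0$ is precisely the formula for $g_0(z)$ established in the previous section, which fits the template \eqref{gm_id1} with $P_{1,0}(z)=P_{2,0}(z)=\tfrac12$. The engine of the induction is the elementary observation that
\[
z\,\frac{d}{dz}g_{m-1}(z)=\sum_{n=0}^{\infty}\frac{2^{2n}\,n\cdot n^{m-1}}{2n+1}\,\frac{z^n}{C_n}=g_m(z),
\]
so differentiating the representation \eqref{gm_id1} at level $m-1$ and multiplying by $z$ must reproduce \eqref{gm_id1} at level $m$; matching the rational and arctan parts on both sides will yield the recursions \eqref{rec1}--\eqref{rec2}.

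To carry this out I would first record a short computational lemma. Set $u(z)=\sqrt{z/(1-z)}$ and $A(z)=\arctan(u(z))/u(z)$, so that \eqref{gm_id1} reads
\[
g_m(z)=\frac{P_{1,m}(z)}{(1-z)^{m+1}}+\frac{P_{2,m}(z)\,A(z)}{(1-z)^{m+2}}.
\]
Using $u'(z)=1/(2u(1-z)^2)$ together with $1+u^2=1/(1-z)$, a direct calculation gives the key identity
\[
A'(z)=\frac{1}{2z}-\frac{A(z)}{2z(1-z)}.
\]
Now I would assume \eqref{gm_id1} at level $m-1$, differentiate by the Leibniz rule, substitute this expression for $A'(z)$, and multiply through by $z$. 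The critical point is that the factor $z$ out front cancels the apparent $1/z$ singularities introduced by the substitution, so the resulting expression splits cleanly into a purely rational piece and a piece proportional to $A(z)$.

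Placing the rational piece over the common denominator $(1-z)^{m+1}$ produces the numerator $z(1-z)P_{1,m-1}'(z)+mz\,P_{1,m-1}(z)+\tfrac12 P_{2,m-1}(z)$, which is exactly the right-hand side of \eqref{rec1}; placing the $A(z)$-piece over $(1-z)^{m+2}$ produces $z(1-z)P_{2,m-1}'(z)+(m+1)z\,P_{2,m-1}(z)-\tfrac12 P_{2,m-1}(z)$, which matches \eqref{rec2}. A short final check then confirms the structural properties: rationality of coefficients is immediate since the operations involve only differentiation, multiplication by integers, and by $\tfrac12$; and the degree stays bounded by $m$ because each of the operators $z(1-z)(d/dz)+mz$ and $z(1-z)(d/dz)+(m+1)z-\tfrac12$ raises the degree of a degree-$(m-1)$ polynomial by at most one, while adding $\tfrac12 P_{2,m-1}(z)$ (of degree $m-1$) does not change this bound. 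The main obstacle is purely bookkeeping in the middle step: one must carefully track the $1/(2z)$ term produced by $A'(z)$ and verify that, after multiplication by $z$, it recombines with the other rational contributions to leave a genuine polynomial numerator rather than a rational function in $z$.
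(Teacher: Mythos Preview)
Your proposal is correct and follows essentially the same approach as the paper: induction on $m$ via the relation $g_m(z)=z\,\frac{d}{dz}g_{m-1}(z)$, together with the key derivative identity $A'(z)=\frac{1}{2z}-\frac{A(z)}{2z(1-z)}$ for $A(z)=\arctan\bigl(\sqrt{z/(1-z)}\bigr)\big/\sqrt{z/(1-z)}$, from which the recursions \eqref{rec1}--\eqref{rec2} drop out after splitting into the rational and $A(z)$-parts. The only minor point left implicit (in the paper as well) is that the degree is \emph{exactly} $m$ rather than merely at most $m$; this follows by tracking leading coefficients through the recursions, which shows that the top coefficient of $P_{1,m}$ stays at $\tfrac12$ and that of $P_{2,m}$ doubles at each step.
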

\begin{proof}
The proof of the representation \eqref{gm_id1} is easy using induction on $m$ taking into account
$g_{m+1}(z)=z(d/dz)\,g_{m}(z)=(z d/dz)^m g_0(z)$ and the identity

\begin{equation*}
\frac{d}{dz} \frac{\arctan \Big (\sqrt{\frac{z}{1-z}}\Big )}{\sqrt{\frac{z}{1-z}}} = \frac{1}{2z} - \frac{1}{2z(1-z)}
\frac{\arctan \Big (\sqrt{\frac{z}{1-z}}\Big )}{\sqrt{\frac{z}{1-z}}}.
\end{equation*}

The recursive expressions for $P_{1,m}(z)$ and $P_{2,m}(z)$ follow from the proof as a by-product.
\end{proof}

We mention that the coupled recursions \eqref{rec1} and \eqref{rec2} can be solved explicitly, but the closed forms seem not to shed enough light on their general structure. Nevertheless, we can prove the following expressions:
\begin{proposition}
For each $m$,

\begin{equation*}
P_{1,m}(z) = \frac{1}{2} m! z^m + \sum_{j=0}^{m-1} \binom {m}{j} j! z^j \Big ( z(1-z) \frac{d}{dz} P_{1,m-(j+1)}(z)
+ \frac{1}{2} P_{2,m-(j+1)}(z)\Big ),
\end{equation*}


\begin{equation*}
P_{2,m}(z) = \frac{1}{2} \prod_{j=1}^m \Big ((j+1)z-\frac{1}{2}\Big ) + z(1-z) \sum_{j=1}^m \frac{d}{dz} P_{2,m-j}(z)
\prod_{k=2}^j \Big ((m+3-k)z-\frac{1}{2}\Big ),
\end{equation*}

where the empty product is one and the empty sum is zero.
\end{proposition}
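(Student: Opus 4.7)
My plan is to verify both identities by iterating the coupled recursions \eqref{rec1} and \eqref{rec2}. Since \eqref{rec2} is a self-contained (first-order, inhomogeneous in the derivative) recursion for $P_{2,m}$ alone, I would tackle it first, and then use the resulting source term when unrolling the recursion for $P_{1,m}$.

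For $P_{2,m}(z)$, I would rewrite \eqref{rec2} in the form
\begin{equation*}
P_{2,m}(z) = a_m(z)\, P_{2,m-1}(z) + z(1-z)\, \frac{d}{dz} P_{2,m-1}(z), \qquad a_m(z) := (m+1)z - \tfrac12,
\end{equation*}
and apply it repeatedly, at each step replacing the trailing $P_{2,k}$ using the same identity. After $m$ steps the non-derivative contribution collapses to $P_{2,0}\prod_{k=1}^m a_k(z) = \frac12\prod_{j=1}^m ((j+1)z - \tfrac12)$, while each substitution produces a new source term of the form $z(1-z) P_{2,m-j}'(z)$ multiplied by the product of the coefficients accumulated along the way, namely $\prod_{l=m-j+2}^m a_l(z)$. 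Reindexing this product via $k = m+2-l$ turns it into $\prod_{k=2}^j ((m+3-k)z - \tfrac12)$, exactly the factor in the statement; the $j=1$ term corresponds to an empty product equal to $1$. This yields the second displayed formula.

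For $P_{1,m}(z)$, I would introduce the auxiliary source
\begin{equation*}
f_{k}(z) := z(1-z)\, \frac{d}{dz} P_{1,k}(z) + \tfrac12 P_{2,k}(z),
\end{equation*}
so that \eqref{rec1} becomes the clean first-order recursion $P_{1,m}(z) = mz\, P_{1,m-1}(z) + f_{m-1}(z)$. Unrolling this down to $P_{1,0}=\tfrac12$ produces
\begin{equation*}
P_{1,m}(z) = m!\, z^m\, P_{1,0}(z) + \sum_{i=0}^{m-1} \Bigl(\prod_{l=0}^{i-1}(m-l)\Bigr) z^i f_{m-1-i}(z),
\end{equation*}
and the coefficient $\prod_{l=0}^{i-1}(m-l) = m!/(m-i)! = \binom{m}{i} i!$ turns this into the claimed identity after substituting the definition of $f_{m-1-i}$ and renaming the summation index.

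Neither step involves a real obstacle; the only thing requiring care is index bookkeeping, particularly matching the partial product $\prod_{l=m-j+2}^m a_l$ arising from the natural iteration of \eqref{rec2} with the reindexed form $\prod_{k=2}^j ((m+3-k)z - \tfrac12)$ given in the statement, and confirming the edge cases (empty product $=1$ when $j=1$, empty sum $=0$ when $m=0$) reproduce the initial values $P_{1,0}=P_{2,0}=\tfrac12$. A brief induction on $m$ can be used to present the argument cleanly, but the derivation above essentially produces the formulas in one pass.
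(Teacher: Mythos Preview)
Your proposal is correct and is essentially the same argument as the paper's: both exploit that \eqref{rec1} and \eqref{rec2} have the special shape $P_{i,m}=c_m(z)P_{i,m-1}+\text{(source)}$, so one only ever substitutes back into the $c_m(z)P_{i,m-1}$ piece. The paper packages this as a formal induction on $m$, while you unroll the recursion directly; the bookkeeping (your reindexing $k=m+2-l$ for the product and the identification $\prod_{l=0}^{i-1}(m-l)=\binom{m}{i}i!$) matches exactly what the inductive step in the paper verifies.
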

\begin{proof}
We can use induction on $m$ to prove both formulas. For $m=0$ the statements are true. The inductive step for $P_{1,m}(z)$ is

\begin{align*}
 & P_{1,m+1}(z) = (m+1) z P_{1,m}(z) + z(1-z)\frac{d}{dz} P_{1,m}(z) + \frac{1}{2} P_{2,m}(z) \\
 & =  \frac{1}{2} (m+1)! z^{m+1} + \sum_{j=0}^{m-1} \binom {m}{j} j! (m+1) z^{j+1} \Big ( z(1-z) \frac{d}{dz} P_{1,m-(j+1)}(z)
+ \frac{1}{2} P_{2,m-(j+1)}(z)\Big ) \\
& \quad + z(1-z)\frac{d}{dz} P_{1,m}(z) + \frac{1}{2} P_{2,m}(z) \\
  & = \frac{1}{2} (m+1)! z^{m+1} +\! \sum_{j=-1}^{m-1} \!\binom {m+1}{j+1} (j+1)! z^{j+1} \Big ( z(1-z) \frac{d}{dz} P_{1,m-(j+1)}(z) + \frac{1}{2} P_{2,m-(j+1)}(z)\Big ) \\
& = \frac{1}{2} (m+1)! z^{m+1} + \sum_{j=0}^{m} \binom {m+1}{j} j! z^{j} \Big ( z(1-z) \frac{d}{dz} P_{1,m-j}(z)
+ \frac{1}{2} P_{2,m-j}(z)\Big ).
\end{align*}

Similarly, the inductive proof for $P_{2,m}(z)$ is accomplished according to

\begin{align*}
& P_{2,m+1}(z)  = \Big ((m+2)z-\frac{1}{2}\Big )P_{2,m}(z) + z(1-z)\frac{d}{dz} P_{2,m}(z) \\
\quad& = \Big ((m+2)z - \frac{1}{2}\Big )
\left(\frac{1}{2} \prod_{j=1}^m \Big ((j+1)z-\frac{1}{2}\Big )\right. \\
\quad&\quad\left. \,+ z(1-z) \sum_{j=1}^m \frac{d}{dz} P_{2,m-j}(z) \prod_{k=2}^j \Big ((m+3-k)z-\frac{1}{2}\Big )\right) + z(1-z)\frac{d}{dz} P_{2,m}(z)\\
\quad& = \frac{1}{2} \prod_{j=1}^{m+1} \Big ((j+1)z-\frac{1}{2}\Big ) + z(1-z) \left( \frac{d}{dz} P_{2,m}(z) \right.\\
\quad& \quad\left. + \sum_{j=1}^m \frac{d}{dz} P_{2,m-j}(z) \Big ((m+2)z-\frac{1}{2}\Big ) \prod_{k=2}^j \Big ((m+3-k)z-\frac{1}{2}\Big )\right) \\
\quad& = \frac{1}{2} \prod_{j=1}^{m+1} \Big ((j+1)z-\frac{1}{2}\Big ) + z(1-z) \sum_{j=0}^m \frac{d}{dz} P_{2,m-j}(z)
 \prod_{k=1}^j \Big ((m+3-k)z-\frac{1}{2}\Big )\\
\quad& = \frac{1}{2} \prod_{j=1}^{m+1} \Big ((j+1)z-\frac{1}{2}\Big ) + z(1-z) \sum_{j=1}^{m+1} \frac{d}{dz} P_{2,m-(j-1)}(z)
 \prod_{k=1}^{j-1} \Big ((m+3-k)z-\frac{1}{2}\Big )\\
\quad& = \frac{1}{2} \prod_{j=1}^{m+1} \Big ((j+1)z-\frac{1}{2}\Big ) + z(1-z) \sum_{j=1}^{m+1} \frac{d}{dz} P_{2,m+1-j}(z)
 \prod_{k=2}^{j} \Big ((m+4-k)z-\frac{1}{2}\Big ).
\end{align*}

\end{proof}

Applying Theorem \ref{main1_gm} in the case $m=2$ yields

\begin{equation*}
\sum_{n=1}^\infty \frac{2^{2n} n^2}{2n+1}\frac{z^n}{C_n} = \frac{4z^2+12z-1}{8(1-z)^3}
+ \frac{16z^2-2z+1}{8(1-z)^4} \frac{\arctan \Big (\sqrt{\frac{z}{1-z}}\Big )}{\sqrt{\frac{z}{1-z}}},
\end{equation*}

from which we obtain

\begin{equation*}
\sum_{n=1}^\infty \frac{n^2}{(2n+1) C_n} = \frac{2}{3} + \frac{8\sqrt{3}\pi}{81},
\end{equation*}

\begin{equation*}
\sum_{n=1}^\infty \frac{2^{n} n^2}{(2n+1) C_n} = 6 + 2\pi,\qquad 
\sum_{n=1}^\infty \frac{3^{n} n^2}{(2n+1) C_n} = 82 +  \frac{272\sqrt{3}\pi}{9}.
\end{equation*}

\begin{corollary}
For each $m\geq 0$, the sum
$
%
\sum_{n=0}^\infty \frac{2^{n} n^m}{(2n+1) C_n}
%
$
can be expressed in the form $a+b\pi$, with $a$ and $b$ rational. The sums
%
$
\sum_{n=0}^\infty \frac{n^m}{(2n+1) C_n}$ and $\sum_{n=0}^\infty \frac{3^{n} n^m}{(2n+1) C_n}
$
%
allow the same representation but with $b$ irrational.
\end{corollary}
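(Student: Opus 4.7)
The plan is to deduce the corollary as a direct specialization of Theorem \ref{main1_gm}. First I would match each sum to a value of $g_m(z)$: since $2^{2n}z^n = (4z)^n$, the three sums in the statement correspond respectively to $z=\frac14$, $z=\frac12$, and $z=\frac34$, i.e.\
\begin{equation*}
g_m(\tfrac14) = \sum_{n=0}^\infty \frac{n^m}{(2n+1)C_n},\quad g_m(\tfrac12) = \sum_{n=0}^\infty \frac{2^n n^m}{(2n+1)C_n},\quad g_m(\tfrac34) = \sum_{n=0}^\infty \frac{3^n n^m}{(2n+1)C_n}.
\end{equation*}

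Next I would record a quick induction on $m$, using the base case $P_{1,0}(z)=P_{2,0}(z)=\frac12$ together with the recursions \eqref{rec1} and \eqref{rec2}, to confirm that $P_{1,m}(z)$ and $P_{2,m}(z)$ have rational coefficients; the recursions preserve rationality because they only involve rational multiplication, differentiation, and addition. Consequently $P_{i,m}(z_0)$ is rational for any rational $z_0$, and $(1-z_0)^{m+k}$ is likewise a nonzero rational for $z_0\in\{\frac14,\frac12,\frac34\}$.

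The only remaining factor in \eqref{gm_id1} is $\arctan(\sqrt{z/(1-z)})/\sqrt{z/(1-z)}$. I would evaluate it at the three required points: at $z=\frac12$ we get $\arctan(1)/1=\pi/4$; at $z=\frac14$ we get $\arctan(1/\sqrt3)\cdot\sqrt3=\pi\sqrt3/6$; and at $z=\frac34$ we get $\arctan(\sqrt3)/\sqrt3=\pi\sqrt3/9$. Substituting into \eqref{gm_id1} then shows
\begin{equation*}
g_m(\tfrac12) \in \mathbb{Q} + \mathbb{Q}\pi,\qquad g_m(\tfrac14),\ g_m(\tfrac34)\in \mathbb{Q} + \mathbb{Q}\sqrt3\,\pi,
\end{equation*}
which is exactly the claim, with $b$ rational in the first case and $b$ an irrational multiple of $\sqrt3$ (or zero in degenerate cases of no interest) in the other two.

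There is really no obstacle: the corollary is essentially a bookkeeping exercise reading off Theorem \ref{main1_gm}. The one point worth emphasizing in the write-up is the rationality of the polynomial coefficients, since the corollary hinges on it; everything else reduces to the elementary values $\arctan 1,\arctan(1/\sqrt3),\arctan(\sqrt3)$.
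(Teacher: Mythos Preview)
Your proposal is correct and is exactly the argument the paper intends: the corollary is stated immediately after Theorem \ref{main1_gm} (and the $m=2$ evaluations) with no separate proof, because it is just the specialization you describe. Your added remark that the rationality of the coefficients of $P_{1,m}$ and $P_{2,m}$ follows inductively from \eqref{rec1}--\eqref{rec2} is the one point the paper leaves implicit, so making it explicit is a small improvement.
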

\begin{theorem}\label{main2_gm}
For each $m\geq 0$ and all $|z|<1$ $g_m(z)$, possesses the integral representation

\begin{equation*}\label{gm_id2}
g_m(z) = \int_{-1}^1 \frac{Q_m(z)}{(1-z(1-x^2))^{m+2}} dx,
\end{equation*}

where $Q_{m}(z)$ is a polynomial in $z$ of degree $m$ given by

\begin{align}\label{qm_closed}
Q_{m}(z) & = \frac{1}{2} (m+1)! (az)^m + (1-az) z \frac{d}{dz} Q_{m-1}(z) \nonumber \\
& \qquad\qquad + (1-az) z \sum_{j=1}^{m-1} \binom {m+1}{j} j! (az)^j \frac{d}{dz} Q_{m-(j+1)}(z),
\end{align}

with $Q_{0}(z)=\frac12$ and where we have set $a=1-x^2$.
\end{theorem}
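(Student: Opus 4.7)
My plan is to prove both claims simultaneously by induction on $m$, driven by the operator identity $g_{m+1}(z)=z\,\frac{d}{dz}g_m(z)$ already exploited in Theorem~\ref{main1_gm}. The base case $m=0$ is exactly the first part of Theorem~\ref{int_g01}, yielding $Q_0(z)=\tfrac12$ and the trivial instance of~\eqref{qm_closed} (with both the separated derivative term and the sum understood to be absent).

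For the inductive step I would assume
\[
g_m(z)=\int_{-1}^1\frac{Q_m(z)}{(1-az)^{m+2}}\,dx, \qquad a=1-x^2,
\]
apply $z\,\frac{d}{dz}$, and differentiate under the integral sign (legitimate for $|z|<1$ by uniform convergence of the series defining $g_m$ on compact subsets of the disk). Placing the result over the common denominator $(1-az)^{m+3}$ immediately produces the first-order recursion
\[
Q_{m+1}(z)=z(1-az)\,\frac{d}{dz}Q_m(z)+(m+2)\,az\,Q_m(z), \qquad Q_0(z)=\tfrac12.
\]
A one-line degree induction on this recursion shows that $Q_m$ is a polynomial in $z$ of degree $m$ (with coefficients that are polynomials in $x$ through $a=1-x^2$), which settles the structural claim and the existence of the integral representation.

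The remaining task is to verify that the $Q_m$ defined by the recursion also satisfies the explicit formula~\eqref{qm_closed}. I would do this by a second induction on $m$: substitute the induction hypothesis for $Q_m$ into the recursion, expand $z(1-az)\,\frac{d}{dz}Q_m$ and $(m+2)\,az\,Q_m$ separately, then re-collect the resulting terms into the form prescribed by~\eqref{qm_closed} with $m$ replaced by $m+1$. The key maneuvers are: (i) distributing $\frac{d}{dz}$ across the prefactor $(1-az)z$ and the interior sum; (ii) shifting the summation index by one so that the new sum runs from $j=0$ to $j=m$; (iii) invoking the Pascal-type identity $\binom{m+2}{j}=\binom{m+1}{j}+\binom{m+1}{j-1}$ to merge the contributions coming from the two pieces of the recursion. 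This is the same bookkeeping already displayed for $P_{1,m}$ in the preceding proposition, so I would lift that template essentially verbatim.

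The main obstacle is the boundary term at $j=0$: in~\eqref{qm_closed} the piece $(1-az)z\,\frac{d}{dz}Q_{m-1}$ is pulled out of the sum and written separately, and one has to check that after the index shift it fuses correctly with the $j=1$ contribution produced by the $(m+2)\,az\,Q_m$ term. One also must verify that the distinguished piece $\tfrac12(m+2)!(az)^{m+1}$ of $Q_{m+1}$ arises cleanly from $(m+2)\,az$ acting on the $\tfrac12(m+1)!(az)^m$ piece of $Q_m$; this is a short direct calculation once the other terms have been reorganized. Beyond these two bookkeeping points the rest of the verification is purely mechanical expansion and reindexing.
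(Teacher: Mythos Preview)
Your approach is essentially the paper's: establish the base case from Theorem~\ref{int_g01}, differentiate under the integral to obtain the one-step recursion
\[
Q_{m+1}(z)=(m+2)\,az\,Q_m(z)+z(1-az)\,\tfrac{d}{dz}Q_m(z),
\]
and then verify~\eqref{qm_closed} by induction on $m$. One correction to your described bookkeeping: do \emph{not} distribute $\frac{d}{dz}$ through the inductive expression for $Q_m$ (that would produce second derivatives of the $Q_{m-1-j}$, which have no home in~\eqref{qm_closed}); instead, exactly as in the $P_{1,m}$ template you cite, leave $z(1-az)\frac{d}{dz}Q_m$ untouched so it becomes the $j=0$ term of the new sum, and expand only $(m+2)\,az\,Q_m$ via the induction hypothesis. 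After the index shift $k=j+1$ the coefficient identity you need is simply $(m+2)\binom{m+1}{j}j!=\binom{m+2}{j+1}(j+1)!$, so Pascal's rule is not actually required.
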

\begin{proof}
We prove the claim by induction on $m$. Since

\begin{equation*}
g_0(z) = \frac{1}{2} \int_{-1}^1 \frac{1}{(1-z(1-x^2))^{2}} dx,
\end{equation*}

the statement is true for $m=0$. Now, assuming it is true for a fixed $m>0$, we can proceed with

\begin{equation*}
g_{m+1}(z) = z \frac{d}{dz}g_m(z) = z \int_{-1}^1 \frac{\frac{d}{dz}Q_m(z) (1-az)+(m+2)a Q_m(z)}{(1-z(1-x^2))^{m+3}} dx.
\end{equation*}

This gives the recursion

\begin{equation*}\label{qm_rec}
Q_{m}(z) = (m+1) a z Q_{m-1}(z) + z (1-az) \frac{d}{dz} Q_{m-1}(z), \quad m\geq 1.
\end{equation*}

This recursion can be solved by standard methods to give \eqref{qm_closed}. Alternatively, one can prove
\eqref{qm_closed} directly by induction on $m$ using \eqref{qm_rec}.
\end{proof}

\section{Another integral expression for $g_m(z)$ using Mellin transform}
\begin{lemma}\label{Mellin_lem}
For integers $m,n\geq 0$, we have

\begin{equation*}
\sum_{j=0}^m (-1)^j S(m+1,m+1-j) (n+m-j)! = n^m n!,
\end{equation*}

where $S(n,k)$ are the Stirling numbers of the second kind, defined by $S(0,0)=1$, $S(n,n)=S(n,1)=1 \, (n\geq 1),$ and

\begin{equation*}
S(n,k) = \frac{1}{k!}\sum_{s=0}^k (-1)^s \binom {k}{s} (k-s)^n.
\end{equation*}

\end{lemma}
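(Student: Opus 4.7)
My plan is to deduce the identity from the classical formula expressing powers in terms of falling factorials,
\[
x^{N} \;=\; \sum_{k=0}^{N} S(N,k)\, x(x-1)(x-2)\cdots(x-k+1),
\]
which is essentially the defining property of $S(N,k)$. The only preparatory observation needed is the sign-flip
\[
n^{\overline{k}} \;:=\; n(n+1)\cdots(n+k-1) \;=\; (-1)^{k}(-n)(-n-1)\cdots(-n-k+1),
\]
which converts the rising factorial hidden inside $(n+m-j)!$ into a falling factorial evaluated at $-n$.

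Applying the classical formula with $N=m+1$ at $x=-n$ and then clearing the sign by the observation above gives
\[
n^{m+1} \;=\; \sum_{k=0}^{m+1}(-1)^{m+1-k}\, S(m+1,k)\, n^{\overline{k}}.
\]
Next I would multiply through by $(n-1)!$ and use the elementary identities $n^{\overline{k}}(n-1)! = (n+k-1)!$ and $n^{m+1}(n-1)! = n^{m}\, n!$, obtaining
\[
n^{m}\, n! \;=\; \sum_{k=0}^{m+1}(-1)^{m+1-k}\, S(m+1,k)\,(n+k-1)!.
\]
Finally, the reindexing $j=m+1-k$ turns this into the claimed sum; the $k=0$ term drops out because $S(m+1,0)=0$ for $m\ge 0$, so the range $j=0,\ldots,m$ is precisely what appears in the statement.

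I do not expect a genuine obstacle: the argument is purely formal, and the only delicate point is correct bookkeeping of signs and index ranges when converting between falling and rising factorials. As a sanity check I would verify the cases $m=0,1,2$ directly using $S(1,1)=1$, $S(2,1)=S(2,2)=1$, $S(3,1)=1,\ S(3,2)=3,\ S(3,3)=1$, which indeed reproduce $n!$, $n\cdot n!$, and $n^{2}n!$ respectively. An alternative route would be induction on $m$ using the Stirling recurrence $S(m+2,k)=k\,S(m+1,k)+S(m+1,k-1)$, but the substitution approach sketched above is shorter and avoids splitting into cases.
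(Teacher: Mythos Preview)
Your proof is correct and is essentially the same as the paper's: both start from the identity $x^{m+1}=\sum_{k} S(m+1,k)\,x^{\underline{k}}$ (the paper writes it as $x^{m}=\sum_{j}\binom{x}{j}S(m,j)j!$, which is the same thing), substitute $x=-n$, convert the resulting falling factorials into rising factorials, multiply through by $(n-1)!$, and reindex. The only cosmetic difference is that the paper phrases the falling-to-rising conversion via the binomial identity $\binom{-n}{j}=(-1)^{j}\binom{n+j-1}{j}$ rather than your direct sign-flip of the Pochhammer product.
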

\begin{proof}
Consider the known representation

\begin{equation*}
x^m=\sum_{j=0}^{m}\binom{x}{j}S(m,j)j!.
\end{equation*}

Let $x=-n$. Using
$\binom{s}{j}=(-1)^j\binom{-s+j-1}{j}$
we have

\begin{align*}
(-1)^{m+1} n^{m+1} &= \sum_{j=0}^{m+1} \binom{-n}{j} S(m+1,j)j! \\
&= \sum_{j=0}^{m+1}(-1)^j \binom{n+j-1}{j} S(m+1,j)j! \\
&= \sum_{j=0}^{m+1}(-1)^j \frac{(n+j-1)!}{(n-1)!} S(m+1,j).
\end{align*}
Thus, by reindexing the summation
\begin{align*}
(-1)^{m+1} n^{m+1}(n-1)! &= \sum_{j=0}^{m+1} (-1)^{m-j+1} S(m+1,m+1-j) (n+m-j)! \\
&= \sum_{j=0}^{m} (-1)^{m-j+1} S(m+1,m+1-j) (n+m-j)!
\end{align*}
as $S(n,0)=0, n\geq 1$.
\end{proof}
\begin{theorem}\label{main3_gm}
The function $g_m(z)$ possesses the integral representation

\begin{equation*}\label{gm_id3}
g_m(z) = \frac{1}{\sqrt{z}} \sum_{j=0}^m (-1)^j S(m+1,m+1-j)\int_0^\infty x^{(m-j)/2}K_{j+1-m}(2\sqrt{x})\sinh (2\sqrt{xz}) dx,
\end{equation*}

where $K_v(x)$ is the modified Bessel function of the second kind, which can be defined by

\begin{equation*}
K_v(x) = \int_0^\infty \cosh(vt) e^{-x\cosh t} dt \quad (x>0).
\end{equation*}

\end{theorem}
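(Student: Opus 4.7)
My plan is to work from the right-hand side toward $g_m(z)$: I will expand $\sinh(2\sqrt{xz})$ as a Maclaurin series in $\sqrt{xz}$, exchange summation and integration, evaluate the resulting integrals in closed form via the classical Mellin transform of $K_\nu$, and finally apply Lemma \ref{Mellin_lem} to collapse the $j$-sum into the coefficient $n^m n!$ required to reconstruct the defining series of $g_m(z)$.

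Concretely, substituting
\[
\sinh(2\sqrt{xz}) = \sqrt{z}\sum_{n=0}^\infty \frac{2^{2n+1}}{(2n+1)!}\,x^{n+1/2}z^n
\]
into each integral on the right and swapping sum and integral (legitimate because $K_\nu(2\sqrt{x})$ decays like $e^{-2\sqrt{x}}$ at infinity and is locally integrable near $0$ for every $0\leq j\leq m$) reduces matters to evaluating
\[
I_{n,j} = \int_0^\infty x^{(m-j)/2+n+1/2}\,K_{j+1-m}(2\sqrt{x})\,dx.
\]
Applying the classical identity
\[
\int_0^\infty x^{s-1}K_\nu(2\sqrt{x})\,dx = \tfrac{1}{2}\,\Gamma\bigl(s+\tfrac{\nu}{2}\bigr)\Gamma\bigl(s-\tfrac{\nu}{2}\bigr)
\]
with $\nu=j+1-m$ and $s=(m-j)/2+n+3/2$ produces the critical simplification that the two shifts $s\pm\nu/2$ collapse to the integers $n+2$ and $n+m-j+1$, so $I_{n,j}=\tfrac{1}{2}(n+1)!\,(n+m-j)!$.

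Assembling the pieces, the right-hand side of the theorem becomes
\[
\sum_{n=0}^\infty \frac{2^{2n}(n+1)!}{(2n+1)!}\,z^n \sum_{j=0}^m (-1)^j S(m+1,m+1-j)(n+m-j)!.
\]
By Lemma \ref{Mellin_lem} the inner sum equals $n^m\,n!$, and since $(n+1)!\,n!/(2n+1)! = 1/\bigl((2n+1)C_n\bigr)$, the outer series is exactly $g_m(z)$. The main obstacle I anticipate is the bookkeeping in the middle step: one must verify that the half-integer contributions inside the two gamma arguments cancel exactly, and that both arguments remain positive so that the Mellin integral truly converges (indeed $n+2>0$ and $n+m-j+1>0$ for all $n\geq 0$ and $0\leq j\leq m$, giving the condition $\Re(s)>|\Re(\nu)|/2$). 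Once these checks are in place, the identification of the resulting series with $g_m(z)$ is automatic.
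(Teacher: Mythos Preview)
Your argument is correct. The computation of $I_{n,j}$ via the Mellin transform of $K_\nu$ is clean, the gamma arguments do collapse to integers exactly as you claim, and the final identification with $g_m(z)$ is right.

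Your route, however, differs from the paper's. You run the proof \emph{backwards}: starting from the claimed integral, expanding $\sinh$, and quoting the closed-form Mellin transform $\int_0^\infty x^{s-1}K_\nu(2\sqrt{x})\,dx=\tfrac12\Gamma(s+\tfrac\nu2)\Gamma(s-\tfrac\nu2)$ to reduce everything to Lemma~\ref{Mellin_lem}. The paper instead works \emph{forward} and constructively: it writes $n^m n!(n+1)!$ as a product of two Mellin transforms (of $f_m(x)=\sum_j(-1)^jS(m+1,m+1-j)x^{m-j}e^{-x}$ and of $xe^{-x}$), invokes the Mellin convolution theorem, and then recognises the resulting convolution integral as $K_{j+1-m}(2\sqrt{x})$ via the representation $K_v(z)=\tfrac12(z/2)^v\int_0^\infty e^{-(t+z^2/4t)}t^{-v-1}\,dt$; the $\sinh$ appears only at the last step when the remaining series $\sum (4xz)^n/(2n+1)!$ is summed. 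Your verification is shorter and uses one classical integral instead of the convolution machinery, while the paper's derivation explains \emph{why} the Bessel function shows up in the first place. Both arguments rest on the same combinatorial core, Lemma~\ref{Mellin_lem}.
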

\begin{proof}
The proof bases on ideas developed in \cite{amde}.
Recall that the Mellin transform of a real valued function $f(x)$ on $(0,\infty)$
is defined by the integral \cite{deb}

\begin{equation*}
M[f(x)](s) = \int_0^\infty x^{s-1} f(x) dx.
\end{equation*}

The gamma function $\Gamma(n)$ can be interpreted as $M[e^{-x}](n)$ and thus 
\begin{equation*}
M[xe^{-x}](n+1)=(n+1)!.
\end{equation*}

Since

\begin{equation*}
g_m(z) = \sum_{n=0}^\infty \frac{(4z)^n}{(2n+1)!} n^m (n+1)! n!,
\end{equation*}

we want to find a function $f_m(x)$ such that

\begin{equation*}
n^m n! (n+1)! = M[f_m(x)] M[xe^{-x}](n+1).
\end{equation*}

By Lemma \ref{Mellin_lem} it follows that such a function is

\begin{equation*}
f_m(x) = \sum_{j=0}^m (-1)^j S(m+1,m+1-j) x^{m-j} e^{-x}.
\end{equation*}

Now, we are going to apply the Mellin convolution theorem

\begin{equation*}
M[f_1(x)] M[f_2(x)](s) = M[F(x)](s)
\end{equation*}

with

\begin{equation*}
F(x) = \int_0^\infty f_1(x_1) f_2\Big (\frac{x}{x_1}\Big )\frac{dx_1}{x_1}.
\end{equation*}

In our case $F(x)$ equals

\begin{align*}
F(x) & = \int_0^\infty \Big ( \sum_{j=0}^m (-1)^j S(m+1,m+1-j) x_1^{m-j} e^{-x_1} \Big ) \frac{x}{x_1} e^{-x/x_1} \frac{dx_1}{x_1} \\
& = \sum_{j=0}^m (-1)^j S(m+1,m+1-j) \int_0^\infty x \frac{e^{-(x_1+x/x_1)}}{x_1^{j+2-m}} dx_1 \\
& = 2 \sum_{j=0}^m (-1)^j S(m+1,m+1-j) x^{(m+1-j)/2} K_{j+1-m}(2\sqrt{x}),
\end{align*}

where the following representation for the modified Bessel function of the second kind was used (\cite{amde})

\begin{equation*}
K_v(z) = \frac{1}{2} \Big (\frac{z}{2}\Big )^v \int_0^\infty e^{-(t+z^2/4t)}\frac{dt}{t^{v+1}}.
\end{equation*}

Finally, we calculate

\begin{align*}
g_m(z) & = \sum_{n=0}^\infty \frac{(4z)^n}{(2n+1)!} n^m (n+1)! n! \\
& =  \sum_{n=0}^\infty \frac{(4z)^n}{(2n+1)!} M[F(x)](n+1) \\
& =  \int_0^\infty F(x) \sum_{n=0}^\infty \frac{(4xz)^n}{(2n+1)!} dx \\
& =  \int_0^\infty F(x) \frac{\sinh(2\sqrt{xz})}{2\sqrt{xz}} dx.
\end{align*}

\end{proof}
Two special cases of the representation are

\begin{equation*}
g_0(z) = \frac{1}{\sqrt{z}}\int_0^\infty \sinh (2\sqrt{xz}) K_1(2\sqrt{x}) dx
\end{equation*}

and

\begin{equation*}
g_1(z) = \frac{1}{\sqrt{z}}\int_0^\infty \sinh (2\sqrt{xz}) \Big (\sqrt{x} K_0(2\sqrt{x}) - K_1(2\sqrt{x})\Big ) dx.
\end{equation*}

\section{Concluding comments}

In this paper, we have studied an interesting family of infinite series involving Catalan numbers.
In particular, we have evaluated these series for special arguments and provided characterizations.
Before closing we remark that Sprugnoli's identity \eqref{sprug_id} can be integrated resulting in the identity

\begin{equation*}
\sum_{n=0}^\infty \frac{2^{2n}}{(2n+1)(n+1)(n+2)}\frac{z^{n}}{C_n} = \frac{1}{2z} + \frac{1}{2z^2} \arctan^2 \Big (\sqrt{\frac{z}{1-z}}\Big )
- \frac{\arctan \Big (\sqrt{\frac{z}{1-z}}\Big )}{z\sqrt{\frac{z}{1-z}}},\! \quad |z|<1,
\end{equation*}

which contains the evaluation

\begin{equation*}
\sum_{n=0}^\infty \frac{2^{n}}{(2n+1)(n+1)(n+2)C_n} =  \frac{\pi^2}{8} - \frac{\pi}{2} +1
\end{equation*}

as a special instance.




\end{document}